\newtheorem{theorem}{Theorem}
\newtheorem{lemma}{Lemma}
\newtheorem{proposition}{Proposition}
\theoremstyle{definition}
\newtheorem{definition}{Definition}
\newtheorem{example}{Example}
\newcommand{\Z}[1]{\mathbb{Z}_{#1}}
\newcommand{\z}{\mathbb{Z}}
\begin{document}

\title{The Ideal of Vanishing Polynomials and the Ring of Polynomial Functions}
\author[1]{Matvey Borodin\thanks{Corresponding author: \href{mailto:matveyborodin1@gmail.com}{matveyborodin1@gmail.com}}}
\affil[1]{Brookline High School}
\author[2]{Ethan Liu}
\affil[2]{The Harker School}
\author[3]{Justin Zhang}
\affil[3]{Bergen County Academics}
\date{}
\maketitle

\begin{abstract}
Vanishing polynomials are polynomials over a ring which output $0$ for all elements in the ring. In this paper, we study the ideal of vanishing polynomials over specific types of rings, along with the closely related ring of polynomial functions. In particular, we provide several results on generating vanishing polynomials. We first analyze the ideal of vanishing polynomial over $\mathbb{Z}_n$, the ring of the integers modulo $n$. We then establish an isomorphism between the vanishing polynomials of a ring and the vanishing polynomials of the constituent rings in its decomposition. Lastly, we generalize our results to study the ideal of vanishing polynomials over arbitrary commutative rings. 
\end{abstract}

\section{Introduction}
Being a fundamental object in ring theory, polynomial rings over fields and integral domains have been the focus of a significant body of research; however, vanishing polynomials over rings with zero divisors have been less studied. In particular, it is widely known that a polynomial of degree $n$ can have at most $n$ roots over a field, but when zero-divisors are introduced this result may fail. More specifically, a polynomial that vanishes for all $x \in R$ must be of degree at least $|R|$ if $R$ is an integral domain, but if $R$ has zero divisors the degree of this polynomial can be significantly smaller. 

Previous works, such as \cite{SINGMASTER, Niven}, \cite{GREUEL}, and \cite{specker, GUHA} study the set of vanishing polynomials over integer and multivariable rings, respectively, and in a more closely related realm to this paper, \cite{specker, GUHA} studied the ring of polynomial functions over the integers modulo $n$. In this paper, we expand on this study of vanishing polynomials and their corresponding polynomial function over specific families of rings.

In Section~\ref{vanishing_polynomials}, we begin with definitions of essential terms followed by some background results that are used throughout the paper. In Section~\ref{vanishing_polynomials_over_znz}, we look at the generating set of the ideal of vanishing polynomials over $\Z{n}$, generalizing it to encompass results from various other works. In Section~\ref{structure_of_poly_functions}, we prove that the ring of vanishing polynomials over a product ring is isomorphic to the direct product of the rings of vanishing polynomials over each of the constituent rings. Similarly, the ring of polynomial functions over a product ring is isomorphic to the direct product of the rings of polynomial functions over each of the constituent rings. In Section~\ref{general_vanishing_polys} we find vanishing polynomials over arbitrary rings and apply these results to show that they give a complete classification for the integers modulo $n$. 

\section{Vanishing Polynomials} \label{vanishing_polynomials}

Throughout this paper, we assume rings to be commutative and with identity unless otherwise specified. Let $R$ be a ring and let $R[x]$ denote the ring of polynomials with coefficients in $R$. We study objects in $R[x]$ known as \emph{vanishing polynomials}.

\begin{definition}
A \emph{polynomial} $F(x)$ in a polynomial ring $R[x]$ is a formal sum $$a_{n}x^{n}+a_{n-1}x^{n-1}+\cdots+a_{1}x+a_{0}$$ for some nonnegative integer $n$, where each $a_{i} \in R$ and $x$ is an indeterminate.
\end{definition}

\begin{definition}
A \emph{vanishing polynomial} $F(x) \in R[x]$ is a polynomial such that $F(a) = 0$ for all $a \in R$. By definition, $0$ itself is a vanishing polynomial.
\end{definition}

The set of vanishing polynomials over a ring $R[x]$ is known to form an ideal. Thus, taking $I$ to be the ideal of vanishing polynomials, it makes sense to consider the quotient $R[x]/I$. To understand the significance of this operation we must first consider the distinction between a \emph{polynomial} and a \emph{polynomial function}. 

\begin{definition}
A \emph{polynomial function} $f: R \to R$ is a function on $R$ for which there exists a polynomial $F(x) \in R[x]$ such that $f(r)=F(r)$ for all $r \in R$.
\end{definition}

When we need to distinguish between the two, we refer to the polynomial with an uppercase letter, such as $F(x)$, and the corresponding polynomial function with a lowercase letter, such as $f(x)$. Note that each polynomial $F(x)$ corresponds to a unique polynomial function $f(x)$, but as we will see later, any given polynomial function $f(x)$ must correspond to an infinite number of polynomials if there is a nonzero vanishing polynomial. The proposition below is a statement found in \cite{gilmer1999ideal} connecting polynomials and polynomial functions. 

\begin{proposition}
\label{quotient}
The quotient $R[x] / I$, where $I$ denotes the ideal of vanishing polynomials over $R[x]$, is the ring of polynomial functions $f: R \to R$. 
\end{proposition}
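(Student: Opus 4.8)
The plan is to exhibit $R[x]/I$ explicitly as the ring of polynomial functions by constructing a surjective ring homomorphism from $R[x]$ onto the polynomial functions whose kernel is precisely $I$, and then invoking the first isomorphism theorem for rings. Let $R^R$ denote the ring of all functions $R \to R$ under pointwise addition and multiplication, with identity the constant function $1$. I would define the \emph{evaluation map} $\varphi \colon R[x] \to R^R$ by sending a polynomial $F(x)$ to the function $\varphi(F) \colon r \mapsto F(r)$; by the definition of a polynomial function, the image of $\varphi$ is exactly the set of polynomial functions $f \colon R \to R$.

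First I would verify that $\varphi$ is a ring homomorphism. This reduces to checking that for each fixed $a \in R$ the substitution map $F \mapsto F(a)$ respects addition, multiplication, and the identity: additivity is immediate from collecting coefficients, and multiplicativity follows from the distributive law together with the commutativity of $R$, so that $\varphi(F+G)=\varphi(F)+\varphi(G)$ and $\varphi(FG)=\varphi(F)\,\varphi(G)$ hold pointwise. Consequently $\operatorname{im}\varphi$ is a subring of $R^R$, which confirms in passing that the polynomial functions do indeed form a ring.

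Next I would compute the kernel. A polynomial $F$ lies in $\ker\varphi$ if and only if $\varphi(F)$ is the zero function, i.e.\ $F(r) = 0$ for every $r \in R$, which is precisely the condition that $F$ be a vanishing polynomial. Hence $\ker\varphi = I$. Applying the first isomorphism theorem then yields $R[x]/I \cong \operatorname{im}\varphi$, and since $\operatorname{im}\varphi$ is by construction the ring of polynomial functions, the proposition follows.

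I do not expect a serious obstacle here, as the argument is essentially the first isomorphism theorem applied to evaluation; the only points requiring care are conceptual rather than computational. The first is to fix the correct ring structure on the target—the pointwise operations on $R^R$—so that ``the ring of polynomial functions'' is a well-defined subring rather than merely a set. The second is the implicit use of commutativity in establishing multiplicativity of evaluation; under the standing assumption that $R$ is commutative this is routine, but it is the one hypothesis the argument genuinely relies on.
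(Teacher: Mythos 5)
Your proof is correct, and it is the standard argument: the paper itself gives no proof of this proposition, citing it instead to the literature (Gilmer), so your evaluation-homomorphism argument via the first isomorphism theorem supplies exactly the proof the paper omits. As a small bonus, your kernel computation also re-establishes the paper's earlier unproved assertion that the vanishing polynomials form an ideal, since $I = \ker\varphi$ is automatically an ideal of $R[x]$.
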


\section{Vanishing Polynomials Over $\Z{n}$} \label{vanishing_polynomials_over_znz}

It was shown in \cite{SINGMASTER} that any element of the ideal of vanishing polynomials over $\Z{n}$ is of the form 
\begin{equation}\label{van_poly}
G(x) = F(x)B_s(x) + \sum_{k=0}^{s - 1} a_k \cdot\frac{n}{\gcd(k!, n)} \cdot B_k(x)
\end{equation}
where $B_k(x) = (x+1)(x+2)\ldots (x+k)$ with $B_0(x) = 1$, and $s$ is the smallest integer such that $n \mid s!$, known as the Smarandache or Kempner function. $F(x)$ is a polynomial which is uniquely defined based on $G(x)$, and $a_k$'s are integers also uniquely defined in the range $0 \leq a_k < \gcd(k!, n)$. Similar results were presented in \cite{GREUEL, specker, Niven} with slightly different definitions of $B_k(x)$, which we discuss later.

We propose a new method of finding vanishing polynomials with integer coefficients over $\Z{n}$, inspired by \cite{newman_1982}. It is well known that any integer-valued polynomial can be uniquely written as a linear sum with integer coefficients of functions of the form $\binom{x}{k} = x(x-1)(x-2)\ldots(x-k+1)/k!$. If we want a polynomial to vanish modulo $n$, it must evaluate to an integer multiple of $n$ for all integer inputs, so it must be $n$ times an integer-valued polynomial. Therefore, any vanishing polynomial $F(x)$ corresponds to an integer-valued polynomial $G(x) = F(x)/n$. Conversely, for an integer-valued polynomial $G(x)$ to correspond to a polynomial $F(x) = nG(x)$, all resulting coefficients in $F(x)$ must be integers. 

As mentioned previously, every such $G(x)$ can be uniquely represented as a sum $G(x) = \sum_{k = 1}^{m} c_k \binom{x}{k}$ so every vanishing polynomial $F(x)$ over $\Z{n}$ can be uniquely represented as 
$$
F(x) = \sum_{k = 1}^{m} n c_k \binom{x}{k}
$$
where $c_k$ and $m$ are integer values. Now we classify which sums of this form yield valid polynomials. First, we will show that each element of the sum must have integer coefficients.

\begin{proposition}
If any term in the summation $\sum_{k=1}^{m} nc_k \binom{x}{k}$ has a non-integer coefficient then the resulting polynomial cannot have integer coefficients. 
\end{proposition}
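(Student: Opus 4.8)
The plan is to prove the contrapositive: I will show that if the full sum $F(x) = \sum_{k=1}^m n c_k \binom{x}{k}$ has all integer coefficients, then every individual term $n c_k \binom{x}{k}$ must also have integer coefficients. First I would record the single most useful fact about one term. Since $k!\binom{x}{k} = x(x-1)\cdots(x-k+1)$ is a monic polynomial with integer coefficients, each term factors as $n c_k \binom{x}{k} = \frac{n c_k}{k!}\cdot x(x-1)\cdots(x-k+1)$. Consequently, the term $n c_k \binom{x}{k}$ has all integer coefficients if and only if its leading coefficient $\frac{n c_k}{k!}$ is an integer: integrality of the leading coefficient propagates to every coefficient because it merely scales an integer polynomial, and the converse is immediate. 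This reduces the whole proposition to controlling one rational number per term, namely $\frac{n c_k}{k!}$.

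Next I would exploit the triangular (degree-graded) structure of the basis $\{\binom{x}{k}\}$. Since $\binom{x}{k}$ has degree exactly $k$, only terms with index $j \ge k$ can contribute to the coefficient of $x^k$ in $F(x)$; in particular, the coefficient of $x^m$ in $F(x)$ equals $\frac{n c_m}{m!}$. This yields the base case of the argument: if $F(x)$ has integer coefficients, then $\frac{n c_m}{m!} \in \mathbb{Z}$, so by the reduction above the top term $n c_m \binom{x}{m}$ already has integer coefficients.

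I would then run a downward induction on $k$ from $m$ to $1$. Assuming $\frac{n c_j}{j!} \in \mathbb{Z}$ for all $j > k$, each of those higher terms has integer coefficients, so subtracting them from $F(x)$ leaves $\sum_{j \le k} n c_j \binom{x}{j}$ with integer coefficients. The coefficient of $x^k$ in this truncated sum is exactly $\frac{n c_k}{k!}$, which is therefore an integer, forcing the $k$-th term to have integer coefficients. Peeling off terms from the top in this fashion exhausts all indices and establishes the contrapositive, which is precisely the statement of the proposition.

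I do not expect a serious obstacle here; the only point requiring genuine care is the first step, the claim that integrality of a single term's leading coefficient already forces every coefficient of that term to be an integer. Once that is in hand, everything else is bookkeeping enabled by the fact that the $\binom{x}{k}$ sit in distinct degrees, so the top coefficient of the remaining sum always isolates a single $\frac{n c_k}{k!}$.
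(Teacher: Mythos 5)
Your proof is correct, and it rests on the same key observation as the paper's: since $k!\binom{x}{k}=x(x-1)\cdots(x-k+1)$ is monic with integer coefficients, a term $nc_k\binom{x}{k}$ has all integer coefficients if and only if its leading coefficient $\tfrac{nc_k}{k!}$ is an integer. Where you differ is in the logical organization. The paper argues by contradiction with an upward chase: a bad term of degree $i$ forces, for the sum's $x^i$-coefficient to be an integer, some term of strictly greater degree $j$ with a non-integer coefficient, hence a non-integer leading coefficient, and so on, contradicting the existence of a term of maximal degree. You instead prove the contrapositive by downward induction, isolating the top coefficient $\tfrac{nc_m}{m!}$ (which only the degree-$m$ term can contribute to), subtracting that now-integral term, and peeling down through the indices. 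Your version buys two things: it avoids the paper's slightly informal ``inductively \dots there is no term of maximal degree'' step by replacing an infinite-ascent argument with a terminating induction on a finite sum, and it yields as a byproduct the explicit divisibility $k!\mid nc_k$ for every $k$ --- exactly the condition the paper needs in the very next paragraph to parametrize valid choices of $c_k$. Your peeling argument also foreshadows the polynomial-division decomposition the paper carries out at the end of Section~\ref{vanishing_polynomials_over_znz}, so it fits the surrounding development naturally. The one step you flagged as needing care --- that integrality of a term's leading coefficient forces integrality of all its coefficients --- you handle correctly via the factorization through the integer polynomial $x(x-1)\cdots(x-k+1)$, so there is no gap.
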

\begin{proof}
Assume for contradiction that we can construct a polynomial of this form where one of the terms has a non-integer coefficient. If any term $nc_i \binom{x}{i} = \frac{nc_i}{i!}\cdot x(x-1)(x-2)\ldots(x-i+1)$ has a non-integer coefficient this means $i!$ does not divide $nc_i$ and therefore the leading coefficient of this term, that is the coefficient on $x^i$, is also non-integer. Thus, if the sum is to have all integer coefficients, there must be a term of greater degree, say $j$, that has a non-integer coefficient on $x^i$. As before, this implies that the leading coefficient of this new term, that is the coefficient on $x^j$, is also non-integer. Inductively, we conclude that for every term with non-integer coefficients, there must be a term of greater degree with non-integer coefficients and therefore there is no term of maximal degree. We have reached a contradiction. 
\end{proof}

    For an element of the form $nc_k\cdot \binom{x}{k}$ to have integer coefficients, we must have $k! \mid nc_k$. The smallest such $c_k$ is $k! /\gcd(n, k!)$ and any greater $c_k$ would be a multiple of this, so any valid $c_k$ can be written as $a_k \cdot (k! /\gcd(n, k!))$ for an arbitrary integer $0 \leq a_k < \gcd(n, k!)$. Note that any $a_k$ outside this range gives a polynomial equivalent to having an $a_k$ in this range by reduction modulo $n$. If we define $s$ to be the smallest integer such that $n \mid s!$, any polynomial $na_k\cdot (k!/\gcd(k!, n))\cdot \binom{x}{k}$ where $k \geq s$ is a polynomial multiple of $\binom{x}{s}$, therefore we have arrived at exactly the formulation in Equation~\ref{van_poly} except with $B_k(x)$ defined as $k!\cdot \binom{x}{k}$. 

An immediate consequence of this formula is that the generating set for the ideal of vanishing polynomials over $\Z{n}$ is $$\left\{\frac{n}{\gcd(k!, n)} \cdot B_k(x) \mid k \in \z_{\geq 0}\right\}$$ for either definition of $B_k(x)$. Note that if $k$ is less than the smallest prime divisor of $n$, we get the zero polynomial.

From this formulation, we can immediately find the degree of the minimal degree monic vanishing polynomial and minimal degree non-monic vanishing polynomial, which would be $s$ and the smallest factor of $n$, respectively. Note that the minimal degree non-monic polynomial must be unique up to multiplication by a constant since the generating set only contains a single nonzero polynomial of that degree or lower. 

There exists a far simpler description of this minimal-degree polynomial, namely $\frac{n}{p_1}\cdot x(x^{p_1 - 1} - 1)$ where $p_1$ is the smallest prime divisor of $n$. Fermat's little theorem states that $x^{p_1 - 1} - 1 = 0$ modulo $p_1$ for any $x$ not divisible by $p_1$. To take care of the $p_1 \mid x$ case we multiply the whole polynomial by $x$. Thus, the polynomial $x(x^{p_1 - 1} - 1)$ is a multiple of $p_1$ for any input, and when multiplied by the leading coefficient $n/p_1$ it vanishes over $\Z{n}$. Since the leading coefficient and degree matches the other formulation we have for the minimal degree monic polynomial they must be the same polynomial over $\Z{n}$, which is an interesting identity.

While the generating set we found above is convenient in many ways because it uses linear combinations, if we consider it from the point of view of a generating set of an ideal, that is allow any coefficients in the ring, many of the elements become redundant. In particular, if we have two polynomials $a\cdot \binom{x}{i}\cdot i!$ and $a\cdot \binom{x}{j}\cdot j!$ for some integer $a$, and $i < j$ then the second polynomial is a polynomial multiple of the first and therefore redundant in a generating set. Thus, to minimize our generating set we can remove any polynomials $k!\cdot (n/\gcd(k!, n))\cdot \binom{x}{k}$ for which $k$ is not the minimal integer which gives the same value of $\gcd(k!, n)$. 


Let us return to the varying definitions of $B_k(x)$ across multiple papers. In fact, $B_k(x)$ can be replaced with any sequence of terms such that the product is divisible by $\gcd(n, k!)$ and the set of generators will remain valid (for instance, $B_k(x) = (x+i)(x+i+1)\ldots(x+i+k-1)$ for any integer $i$). The fact that a polynomial that uses any such $B_k(x)$ is vanishing is easy to prove. The coefficient on $B_k(x)$ must be a multiple of $n/\gcd(n, k!)$, thus the product is a multiple of $n$ and the polynomial is vanishing. Furthermore, any polynomial $F(x)$ with degree $d$ and integer coefficients can be uniquely decomposed as
\begin{equation}\label{van_poly_decomposition}
F(x) = \sum_{k=0}^{d} b_k B_k(x)
\end{equation}
using polynomial division as long as each $B_k(x)$ has degree $k$ and is monic. It remains to show that every vanishing polynomial decomposed in this way has coefficients conforming with the required conditions, namely showing that $b_k = a_k\cdot n/\gcd(n, k!)$ for an integer $a_k$. We show this by polynomial division. Let our starting vanishing polynomial $F(x)$ have degree $d$. By Equation~\ref{van_poly}, we know the leading coefficient must be of the form $a_d\cdot \gcd(n, d!)$ for some integer $a_d$. When we take $F(x) - a_d\cdot \gcd(n, d!)\cdot B_d(x)$, we have the difference of two vanishing polynomials, giving another vanishing polynomial $F_1(x)$ with degree $d_1 < d$. Applying Equation~\ref{van_poly} again, the leading coefficient of $F_1(x)$ must be of the form $a_{d_1} \cdot \gcd(n, d_1!)$ so we can repeat the procedure to get a new vanishing polynomial $F_2(x)$ with degree $d_2 < d_1$. Since the initial degree $d$ is finite, the process must terminate with $d_m = 0$ at step $m$, showing that $b_k = a_k \cdot n/\gcd(n, k!)$ in Equation~\ref{van_poly_decomposition} for a vanishing polynomial $F(x)$, as desired.

\section{Vanishing Polynomials Over Product Rings} \label{structure_of_poly_functions}

The results in Section \ref{vanishing_polynomials_over_znz} can be generalized to direct products of multiple rings as follows.

Let $k \geq 2$ be an arbitrary positive integer, and 
let $R_{1}, \ldots, R_{k}$ be finite commutative rings. 
Let $R \cong R_{1} \times \cdots \times R_{k}$, and 
let $I$ be the vanishing polynomial ideal of $R[x]$, $I_{1}$ be the ideal of vanishing polynomials of $R_{1}[x]$, and so on. 
For any $R_{i}$, let $\pi_{R_{i}} : R \to R_{i}$ be the canonical projection mapping onto $R_{i}$.

First, we establish an isomorphism that maps vanishing polynomials in $R[x]$ to corresponding tuples of vanishing polynomials in $R_{1}[x], \ldots, R_{k}[x]$ and vice versa. The following well known lemma is found in \cite{ahmed2020commutative}, but we include the following proof as the notions developed in it are used in other results throughout this section.

\begin{lemma} \label{polynomial_product}
Given $R \cong R_{1} \times \cdots \times R_{k}$, we have
$$R[x] \cong R_{1}[x] \times \cdots \times R_{k}[x].$$
\end{lemma}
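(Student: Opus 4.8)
The plan is to prove the ring isomorphism $R[x] \cong R_1[x] \times \cdots \times R_k[x]$ by exhibiting an explicit map and verifying it is a bijective ring homomorphism. The natural candidate is the map induced by applying the canonical projections $\pi_{R_i}$ coefficientwise. Let me think about how to structure this cleanly.

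The isomorphism $R \cong R_1 \times \cdots \times R_k$ means every element $r \in R$ can be written as a tuple $(r_1, \ldots, r_k)$ with $r_i \in R_i$. A polynomial $F(x) = \sum_j a_j x^j \in R[x]$ has each coefficient $a_j = (a_j^{(1)}, \ldots, a_j^{(k)})$. The map I'd define is
$$\Phi: R[x] \to R_1[x] \times \cdots \times R_k[x], \qquad \Phi(F) = \left(\sum_j a_j^{(1)} x^j, \ldots, \sum_j a_j^{(k)} x^j\right),$$
i.e., $\Phi(F) = (\pi_{R_1}\!\circ\! F, \ldots, \pi_{R_k}\!\circ\! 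F)$ where $\pi_{R_i}$ is applied to each coefficient. So the $i$-th component just keeps the $i$-th coordinate of every coefficient.

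Let me sketch the verification steps. First I'd check $\Phi$ is well-defined and that it's a ring homomorphism: additivity is immediate since addition of polynomials is coefficientwise and each $\pi_{R_i}$ is additive; for multiplicativity, the coefficient of $x^m$ in a product $FG$ is the convolution $\sum_{j} a_j b_{m-j}$, and since $\pi_{R_i}$ is a ring homomorphism it commutes with both this sum and each product $a_j b_{m-j}$, so $\Phi(FG) = \Phi(F)\Phi(G)$ component by component. The multiplicative identity $1 \in R[x]$ (the constant polynomial whose coefficient is $1_R = (1_{R_1}, \ldots, 1_{R_k})$) maps to the tuple of identities, so $\Phi$ preserves $1$.

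For bijectivity I'd define the inverse map $\Psi$ directly rather than argue abstractly. Given a tuple $(G_1, \ldots, G_k)$ with $G_i = \sum_j c_j^{(i)} x^j \in R_i[x]$, set $\Psi(G_1, \ldots, G_k) = \sum_j (c_j^{(1)}, \ldots, c_j^{(k)}) x^j$, padding the shorter polynomials with zero coefficients so all have a common degree bound; this is well-defined because the ring isomorphism $R \cong \prod R_i$ lets us reassemble coordinatewise coefficients into elements of $R$. Then I'd verify $\Phi \circ \Psi = \mathrm{id}$ and $\Psi \circ \Phi = \mathrm{id}$, both of which reduce to the coordinatewise bijection $R \cong \prod R_i$ applied degree by degree. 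The only mild subtlety, and the one point worth stating carefully, is handling polynomials of differing degrees across the factors: the zero-padding must be spelled out so that the bookkeeping on coefficients is unambiguous, but no genuine obstacle arises since this is exactly the componentwise structure of the product ring lifted to the polynomial level. This is the step I'd expect to require the most care in exposition, though it is routine in substance.
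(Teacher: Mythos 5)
Your proposal is correct and follows essentially the same route as the paper's proof: both define the coefficientwise projection map $\phi(F) = (\pi_{R_1}\circ F, \ldots, \pi_{R_k}\circ F)$, verify it is a ring homomorphism, and establish bijectivity via the explicit coefficientwise inverse. Your version is in fact slightly more careful than the paper's, spelling out multiplicativity via the convolution formula, preservation of the identity, and the zero-padding bookkeeping, and checking the inverse in both directions.
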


\begin{proof}
We define a mapping $\phi$ from $R[x]$ to $R_{1}[x] \times \cdots \times R_{k}[x]$ which breaks down a polynomial $P(x) \in R[x]$ of degree $m$ into a $k$-tuple of polynomials by splitting up the components in the coefficients. More specifically, let
$$P(x)=\sum_{i=0}^{m}a_{i}x^{i}.$$
Then,
$$\phi(P(x)) = \left(\sum_{i=0}^{m}\pi_{R_{1}}(a_{i})x^{i}
,\sum_{i=0}^{m}\pi_{R_{2}}(a_{i})x^{i}, 
\ldots, \sum_{i=0}^{m}\pi_{R_{k}}(a_{i})x^{i}\right).$$
Notice that $\phi$ is a homomorphism because for any arbitrary polynomials $P(x),Q(x)$ in $R[x]$, $\phi(P(x)+Q(x))$ is the componentwise sum of $\phi(P(x))$ and $\phi(Q(x))$, and $\phi(P(x) Q(x))$ is the componentwise product of $\phi(P(x))$ and $\phi(Q(x))$. In addition, we define $\phi^{-1}$ from $R_{1}[x] \times \cdots \times R_{k}[x]$ to $R[x]$ mapping $k$ polynomials $P_{1}(x) \in R_{1}[x], \ldots, P_{k}(x) \in R_{k}[x]$ with highest degree $m$ to a polynomial $P(x) \in R[x]$. For any integer $j$ from 1 to $k$, let
$$P_{j}=\sum_{i=1}^{m}a_{i,j}x^{i}.$$
Then,
$$\phi^{-1}((P_{1}(x), \ldots, P_{k}(x))) = \sum_{i=1}^{m}(a_{i,1},a_{i,2}, \ldots, a_{i,k})x^{i}.$$

We now note that for any polynomial $P(x) \in R[x]$, $\phi(\phi^{-1}(P(x)))=P(x)$
Therefore $\phi$ is invertible, so it is an isomorphism.
\end{proof}

\begin{theorem}
The ideal $I$ of vanishing polynomials in $R[x]$ is isomorphic to the direct products of the ideals $I_{1}, \ldots, I_{k}$ of vanishing polynomials in $R_{1}[x], \ldots, R_{k}[x]$, respectively. In other words,
$$I \cong I_{1} \times \cdots \times I_{k}.$$
\end{theorem}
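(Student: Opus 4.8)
The plan is to show that the isomorphism $\phi$ constructed in Lemma~\ref{polynomial_product} carries the ideal $I$ exactly onto $I_{1} \times \cdots \times I_{k}$; restricting an isomorphism to matching subobjects then yields the desired isomorphism. Since $\phi$ is already known to be a ring isomorphism $R[x] \to R_{1}[x] \times \cdots \times R_{k}[x]$, the entire content of the theorem reduces to the set-theoretic identity $\phi(I) = I_{1} \times \cdots \times I_{k}$.

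First I would establish the central compatibility between evaluation and projection. Write $a = (a^{(1)}, \ldots, a^{(k)}) \in R$ and $P(x) = \sum_{i} a_{i} x^{i} \in R[x]$, and let $P_{j}(x) = \sum_{i} \pi_{R_{j}}(a_{i}) x^{i}$ denote the $j$-th component of $\phi(P(x))$. Because the ring operations in $R$ are componentwise, evaluating $P$ at $a$ and projecting onto the $j$-th factor gives $\pi_{R_{j}}(P(a)) = P_{j}(a^{(j)})$. In other words, the $j$-th coordinate of $P(a)$ depends only on $P_{j}$ and on the $j$-th coordinate of $a$. This is the key step, and essentially the only place where genuine work happens; the remainder is bookkeeping.

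From this identity the equivalence $P \in I \iff \phi(P) \in I_{1} \times \cdots \times I_{k}$ follows. Indeed, $P$ is vanishing precisely when $P(a) = 0$ for every $a \in R$, which by the coordinatewise identity means $P_{j}(a^{(j)}) = 0$ for every $j$ and every $a \in R$. Since $a$ ranges over the full product, each coordinate $a^{(j)}$ ranges over all of $R_{j}$ independently of the others; hence the condition is equivalent to $P_{j}(b) = 0$ for all $b \in R_{j}$ and all $j$, i.e.\ to each $P_{j}$ being a vanishing polynomial in $R_{j}[x]$. This is exactly the statement $\phi(P) \in I_{1} \times \cdots \times I_{k}$. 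The point demanding the most care is precisely this independence-of-coordinates argument: it is what lets ``vanishing componentwise'' decouple into ``each component vanishes,'' and it relies on $a^{(j)}$ being free to take any value in $R_{j}$ while the other coordinates vary arbitrarily.

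Finally I would assemble the pieces. The equivalence above shows simultaneously that $\phi(I) \subseteq I_{1} \times \cdots \times I_{k}$ and, applying it to $\phi^{-1}$, that every tuple of vanishing polynomials pulls back to a vanishing polynomial, so $\phi(I) = I_{1} \times \cdots \times I_{k}$. Restricting the ring isomorphism $\phi$ to $I$ therefore gives a bijective homomorphism of $I$ onto $I_{1} \times \cdots \times I_{k}$, establishing $I \cong I_{1} \times \cdots \times I_{k}$.
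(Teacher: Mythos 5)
Your proposal is correct and follows essentially the same route as the paper: both restrict the isomorphism $\phi$ of Lemma~1 to the ideal $I$ and verify in both directions that vanishing polynomials correspond to tuples of vanishing polynomials. You are in fact somewhat more careful than the paper, which asserts without justification that each projection $P_i(x)$ is vanishing and that $\phi^{-1}$ of a tuple of vanishing polynomials is vanishing, whereas you make explicit the key identity $\pi_{R_j}(P(a)) = P_j(a^{(j)})$ and the independence of coordinates that both directions rely on.
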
 

\begin{proof} Consider a vanishing polynomial $P(x)$ in $R[x]$. Let $P_{i}(x) = \pi_{R_{i}}(\phi(P(x)))$ be the canonical projection of $P(x)$ onto $R_{i}[x]$ for all integers $i$ from 1 to $k$, as defined prior. Then, each $P_{i}(x)$ must be a vanishing polynomial in $R_{i}[x]$. Therefore any vanishing polynomial in $R$ has vanishing polynomials in $R_{1}[x], \ldots, R_{k}[x]$. 

Going in the other direction, suppose we have a $k$-tuple of vanishing polynomials $(P_{1}(x), \ldots, P_{k}(x))$, where $P_i(x)$ is in polynomial ring $R_i[x]$ for $1\leq i\leq k$. It then follows that there exists a polynomial $P(x)$ in $R[x]$ such that $P(x)=\phi^{-1}((P_{1}(x), \ldots, P_{k}(x)))$. It follows that $P(x)$ is a vanishing polynomial. Therefore, if we have a $k$-tuple of vanishing polynomials in each of $R_{1}[x], \ldots, R_{k}[x]$, we can construct a vanishing polynomial in $R[x]$. Thus, $I$ is isomorphic to $I_{1} \times \cdots \times I_{k}$.
\end{proof}

Now we consider an isomorphism between the rings of polynomial functions over $R$ and over $R_1\times R_2 \times \cdots \times R_k$. 

\begin{theorem} Let $R$ be the direct product of $k$ rings $R_{1}, \ldots, R_{k}$. Then, the ring of polynomial functions on $R$ is isomorphic to the direct product of the rings of polynomial functions on $R_{1}, \ldots, R_{k}$. In other words,
$$R[x]/I \cong R_{1}[x]/I_{1} \times \cdots \times R_{k}[x]/I_{k}.$$
\end{theorem}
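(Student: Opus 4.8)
The plan is to lift the isomorphism $\phi$ from the previous two results to the level of quotients, and then to factor the quotient of a product of rings as a product of quotients. Concretely, I would define a map $\Psi : R[x]/I \to R_1[x]/I_1 \times \cdots \times R_k[x]/I_k$ by sending a coset $P(x) + I$ to the tuple of cosets $\big(P_1(x) + I_1, \ldots, P_k(x) + I_k\big)$, where $P_i(x) = \pi_{R_i}(\phi(P(x)))$ is exactly the projection used in the proof of the preceding theorem. The whole argument then reduces to checking that $\Psi$ is a well-defined ring isomorphism.

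For well-definedness and injectivity, suppose $P(x) + I = Q(x) + I$, i.e.\ $P(x) - Q(x) \in I$. By the preceding theorem the $k$-tuple $\phi(P(x) - Q(x))$ lies in $I_1 \times \cdots \times I_k$, so each component $P_i(x) - Q_i(x)$ lies in $I_i$; hence $\Psi$ is independent of the chosen representative. The same computation run in reverse gives injectivity: if every $P_i(x) \in I_i$, then $\phi(P(x)) \in I_1 \times \cdots \times I_k$, and since $\phi$ is an isomorphism carrying $I$ bijectively onto that product (again the preceding theorem), we get $P(x) \in I$, i.e.\ $P(x) + I$ is the zero coset. That $\Psi$ is a ring homomorphism is inherited directly from $\phi$ being a homomorphism, since passing to cosets is componentwise and respects the operations.

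For surjectivity, I would start from an arbitrary tuple $\big(\overline{P_1(x)}, \ldots, \overline{P_k(x)}\big)$, choose representatives $P_i(x) \in R_i[x]$, form $P(x) = \phi^{-1}((P_1(x), \ldots, P_k(x))) \in R[x]$ using the inverse constructed in Lemma~\ref{polynomial_product}, and observe that $\Psi(P(x) + I)$ is precisely the given tuple. Combining injectivity, surjectivity, and the homomorphism property shows $\Psi$ is an isomorphism.

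I expect the only real obstacle to be conceptual bookkeeping rather than computation: one must be careful to cite the previous theorem for both directions of the containment $\phi(I) = I_1 \times \cdots \times I_k$, since well-definedness uses the forward inclusion and injectivity uses the reverse. An alternative, essentially equivalent route is to invoke the general fact that a ring isomorphism $\phi$ carrying an ideal $I$ onto an ideal $J$ induces an isomorphism $R[x]/I \cong \big(R_1[x] \times \cdots \times R_k[x]\big)/J$, and then apply the standard factorization $\big(\prod_i R_i[x]\big) / \big(\prod_i I_i\big) \cong \prod_i \big(R_i[x]/I_i\big)$; I would mention this as the structural reason the statement holds, with the explicit $\Psi$ above serving as the concrete verification.
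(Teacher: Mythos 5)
Your proposal is correct, but it takes a genuinely different route from the paper's. You work entirely at the level of cosets: you lift the isomorphism $\phi$ of Lemma~\ref{polynomial_product} to the quotients, using the fact established in the theorem that $I \cong I_{1} \times \cdots \times I_{k}$ (whose proof shows precisely that $\phi$ carries $I$ onto $I_{1} \times \cdots \times I_{k}$), and then you appeal to the standard factorization $\bigl(\prod_{i} R_{i}[x]\bigr)/\bigl(\prod_{i} I_{i}\bigr) \cong \prod_{i} \bigl(R_{i}[x]/I_{i}\bigr)$. The paper instead argues at the level of functions: invoking Proposition~\ref{quotient} to identify $R[x]/I$ with the ring of polynomial functions on $R$, it defines componentwise maps $\beta_{i}$ sending a polynomial function $f$ on $R$ to functions $f_{i}$ on $R_{i}$ satisfying $\pi_{R_{i}}(f(r)) = f_{i}(\pi_{R_{i}}(r))$, checks separately that each $f_{i}$ is genuinely a polynomial function (by projecting the coefficients of a representative polynomial $F$), and reassembles an arbitrary tuple $(f_{1}, \ldots, f_{k})$ into a polynomial function on $R$ via the idempotent combination $F = (1, \ldots, 0)G_{1} + \cdots + (0, \ldots, 1)G_{k}$. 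Your route buys economy and rigor: it reuses the two previously proved results wholesale, the check that component functions are polynomial is absorbed into the ideal correspondence rather than argued separately, and your explicit attention to both inclusions of $\phi(I) = I_{1} \times \cdots \times I_{k}$ --- the forward one for well-definedness, the reverse one for injectivity --- is exactly the bookkeeping a quotient argument requires; indeed on this point your version is tighter than the paper's, whose function-level correspondence never explicitly verifies injectivity. What the paper's route buys is concreteness: it exhibits the isomorphism directly on the objects the theorem is nominally about (polynomial functions), and its idempotent construction shows constructively how to manufacture a single polynomial over $R$ realizing any prescribed tuple of component behaviors, which carries information beyond the bare isomorphism statement.
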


\begin{proof} We have already established that $R[x]/I$ is the ring of polynomial functions over $R$. Similarly, $R_{1}[x]/I_{1}$ is the ring of polynomial functions over $R_{1}$, and so on. 

Now, consider a polynomial function over $R$. Each polynomial function $f$ is in essence a mapping of every possible input $k$-tuple to an output $k$-tuple. If we consider only the $i$th component of each $k$-tuple, we construct a mapping $\beta_{i}$ which maps a polynomial function $f \in R[x]/I$ to a function $f_{i}$ over $R_{i}$ such that for any $r \in R$, $$\pi_{R_{i}}(f(r))=f_{i}(\pi_{R_{i}}(r)).$$
Notice that given any two polynomial functions $f \mapsto f_{i}$ and $g \mapsto g_{i}$, we have
$$\pi_{R_{i}}(f(r))+\pi_{R_{i}}(g(r))=f_{i}(\pi_{R_{i}}(r))+g_{i}(\pi_{R_{i}}(r)).$$ Therefore, by the definition $\pi_{R_i}$,
$$\pi_{R_{i}}(f(r)+g(r))=(f_{i}+g_{i})(\pi_{R_{i}}(r)).$$
This means that
$$\beta_i(f+g) = \beta_i(f) + \beta_i(g).$$
A similar argument holds over multiplication, so $\beta_i$ is a homomorphism.

Now we must show that $f_{i}$ is a polynomial function (note that not all functions can be expressed as polynomials). Let $F$ be a polynomial in $R[x]$ that corresponds to the polynomial function $f$. Then, take only the $i$-th component of each coefficient in $F$. We now have a polynomial $F_{i}$ in $R_{i}[x]$ that evaluates to $f_{i}$. Therefore $f_{i}$ is a polynomial function, so every $f$ over $R$ can be split into $f_{1}, \ldots, f_{k}$ over $R_{1}, \ldots, R_{k}$, respectively.

On the other hand, suppose we have a tuple of $k$ polynomial functions $f_{1}, \ldots, f_{k}$ over $R_{1}, \ldots, R_{k}$, respectively. We  construct a function $f$ over $R$ that maps any input tuple $(a_{1}, \ldots, a_{k})$ to an output tuple $(f_{1}(a_{1}), \ldots, f_{k}(a_{k}))$.

We now show that $f$ is a polynomial function. Let $F_{1}, \ldots, F_{k}$ be polynomials in $R_{1}[x], \ldots, R_{k}[x]$ which evaluate to $f_{1}, \ldots, f_{k}$, respectively, and $G_{1}, \ldots, G_{k}$ their extensions in $R$. Let $F = (1, \ldots, 0)G_{1} + \cdots + (0, \ldots, 1)G_{k}$, where every $G_{i}$ is multiplied by an $k$-tuple whose $i$th component is $1$ and whose other components are $0$. This polynomial $F$ evaluates to the function $f$. Therefore for all sets of $k$ polynomial functions $f_{1}, \ldots, f_{k}$ over $R_{1}, \ldots, R_{k}$, respectively, we have a polynomial function $f$ over $R$. Thus, the ring of polynomial functions over $R$ is isomorphic to the direct product of the rings of polynomial functions over $R_{1}, \ldots, R_{k}$, respectively.
\end{proof}

\section{Vanishing Polynomials Over General Rings} \label{general_vanishing_polys}

We can apply a generalization of the method used in Section~\ref{vanishing_polynomials_over_znz} to find vanishing polynomials over an arbitrary commutative ring $R$ with identity. Given an element $r \in R$, we denote the ideal generated by $r$ as $(r) = \{qr \mid q \in R\}$. We begin with a definition that simplifies the construction.

\begin{definition}
Given an element $y \in R$ such that the quotient $R/(y)$ partitions $R$ into a finite number of equivalence classes of the form $a_1 + (y), a_2 + (y), \ldots, a_n + (y)$, let $F_y(x) = (x-a_1)(x-a_2)\ldots(x-a_n)$. If $R/(y)$ is not finite we say $F_y(x)$ is not defined.
\end{definition}

We claim that $F_y(x)$ evaluates to a multiple of $y$ for any input $x \in R$. Note that $x$ must fall into some equivalence class $a_i + (y)$, so $x-a_i \in (y)$. Since $F_y(x)$ contains the factor $(x - a_i) \in (y)$ and $(y)$ is an ideal, $F_y(x) \in (y)$ so $F_y(x)$ is a multiple of $y$.

\begin{theorem} \label{general_vanishing_poly}
Consider a set of zero divisors $\{y_1, y_2, \ldots, y_n\}$ which satisfies $y_1y_2\ldots y_n = 0$. Let $N \subset \mathbb{N}$ be an indexing set which can contain $i$ if $F_{y_i}(x)$ is defined (but does not necessarily contain $i$ if $F_{y_i}(x)$ is defined). Let $M \subset \mathbb{N}$ contain all integers $1\leq j \leq n$ such that $j \notin N$. Then the polynomial
$$
G(x) = \left(\prod_{j \in M} y_j\right) \left( \prod_{i \in N} F_{y_i}(x)\right)
$$
is vanishing.
\end{theorem}
\begin{proof}
This follows directly from the definition of $F_y(x)$. For any input $x$, $G(x)$ evaluates to a multiple of $y_1y_2\ldots y_n = 0$ so it is vanishing.
\end{proof}

Observe that if all quotients $R/(y_1), R/(y_2),\ldots, R/(y_n)$ are finite, we can take the subset of $y_i$'s to be the whole set in which case we get a monic polynomial. Also, if $R$ is finite, setting $y_1 = 0$ gives us the monic vanishing polynomial $F_0(x)$ whose degree is $|R|$ (such as for a finite field which has no other zero divisors). 

\begin{example}
This method allows us to find vanishing polynomials not only for finite rings but also for infinite ones. For instance, in the ring $\prod_{n=1}^{\infty} \Z{2}$, consider the pair of zero divisors $(0,1,1,1,1,\ldots)$ and $(1,0,0,0,0,\ldots)$. Since the quotient of this ring by the ideal generated by $(0,1,1,1,1,\ldots)$ is finite (it is isomorphic to $\Z{2}$), there exists a degree 2 vanishing polynomial for this ring. Furthermore, since we can pick any representatives from the cosets in the quotient there exists an infinite number of degree 2 vanishing polynomials for this ring. 
\end{example}

Note that a polynomial generated in this way will often have a lot of redundant terms since if we have two finite quotients $R/(y_i)$ and $R/(y_j)$, there will often be pairs of factors $(x-c) = (x-d)$ with $c$ chosen from one of the cosets of $R/(y_i)$ and $d$ chosen from one of the cosets of $R/(y_k)$. To remove such redundancies we can create groups of identical factors with the additional condition that factors corresponding to quotients $R/(y_i)$ and $R/(y_j)$ can only be grouped together if being a multiple of both $y_i$ and $y_j$ implies being a multiple of $y_i y_j$ (analogous to $y_i$ and $y_j$ being coprime over the integers). Once such a grouping is carried out in a way minimizing the number of groups (including picking representatives from cosets to facilitate this process), we simply keep a single factor from each of the groups. 

\begin{example}
Consider $(x-1)(x-2)\cdot (x-1)(x-2) \cdot (x-1)(x-2)(x-3)$, a polynomial over the ring $\Z{12}$ using the zero divisors $2 \cdot 2 \cdot 3 = 0$. We cannot group terms corresponding to the two instances of $\Z{12}/(2)$ but we can group terms corresponding to $\Z{12}/(2)$ with terms corresponding to $\Z{12}/(3)$. We can also replace one of the terms $(x-1)$ with $(x-3)$ as it belongs to the same equivalence class modulo 2. Thus, we get the grouping pairs $(x-1)(x-1)\cdot(x-2)(x-2)\cdot (x-3)(x-3)\cdot (x-2)$, giving the final reduced polynomial $(x-1)(x-2)(x-3)(x-2)$. 
\end{example}

\begin{proposition}
If $R = \Z{n}$, this description is sufficient to classify all vanishing polynomials when we take $y_1\ldots y_k = n$ to be the prime factorization of $n$.
\end{proposition}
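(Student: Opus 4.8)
The plan is to combine the forward direction, which is immediate, with a converse that matches the construction against the generating set found in Section~\ref{vanishing_polynomials_over_znz}. By Theorem~\ref{general_vanishing_poly}, every polynomial produced by the construction is vanishing, so the ideal it generates is contained in the vanishing ideal $I$. For the reverse containment it suffices, by the Section~\ref{vanishing_polynomials_over_znz} description of $I$, to show that each generator $\tfrac{n}{\gcd(k!,n)}B_k(x)$ arises from the prime-factorization construction (allowing choices of coset representatives and the grouping reduction), where $B_k(x)=x(x-1)\cdots(x-k+1)$ and $n=\prod_p p^{e_p}$. Thus the whole statement reduces to \emph{recovering the generating set} of Section~\ref{vanishing_polynomials_over_znz}.

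First I would fix a generator and read off the data the construction must reproduce. Writing $\gcd(k!,n)=\prod_p p^{\min(e_p,\,v_p(k!))}$, where $v_p$ is the $p$-adic valuation, the target has leading coefficient $\tfrac{n}{\gcd(k!,n)}=\prod_p p^{\max(0,\,e_p-v_p(k!))}$. I would take the $y_i$ to be the primes dividing $n$ with multiplicity, placing $\min(e_p,v_p(k!))$ copies of each prime $p$ into the index set $N$ (each contributing a factor $F_p(x)$) and the remaining $\max(0,e_p-v_p(k!))$ copies into $M$ (contributing to the coefficient). Since $R/(p)\cong\Z{p}$, each $F_p(x)$ is monic of degree $p$, and, choosing its representatives among the actual roots $0,1,\dots,k-1$ of $B_k(x)$, it appears as a sub-product of $B_k(x)$; the grouping reduction then merges factors coming from distinct (hence coprime) primes and discards duplicates. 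The aim is to collapse the raw product to $\tfrac{n}{\gcd(k!,n)}B_k(x)$ itself, or at least to a polynomial generating the same ideal, and then to assemble these across all the breakpoints $k$.

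The step I expect to be the main obstacle is \emph{degree matching}: showing the construction can be compressed to the minimal degree rather than merely producing some vanishing polynomial of the right shape. The naive monic product $\prod_p F_p(x)^{e_p}$ has degree $\sum_p e_p\, p$, whereas the minimal monic vanishing polynomial has degree $s$, the Kempner function; a short computation shows these agree precisely when $e_p\le p$ for every prime power $p^{e_p}$ exactly dividing $n$. When some $e_p>p$ — the smallest instance being $n=8$, where $s=4$ but $e_p p=6$ — higher-order carries make $v_p(s!)=e_p$ fire before $\lfloor s/p\rfloor$ reaches $e_p$, and the grouping reduction is powerless here because it may only merge factors attached to coprime $y_i$ while only a single prime is in play. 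Deciding whether the prime-factorization construction still generates all of $I$ in this regime, or whether extra relations are genuinely required, is therefore the crux of the argument, and I would test it decisively on $\Z{8}$ before attempting the general claim. The natural way to organize the whole proof is to isolate the prime-power case $R=\Z{p^e}$ and analyze it directly, then lift to arbitrary $n$ via the Chinese Remainder decomposition together with the isomorphism $I\cong I_1\times\cdots\times I_k$ of Section~\ref{structure_of_poly_functions}, which reduces the proposition to understanding the prime-power case alone.
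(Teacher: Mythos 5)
Your plan reproduces the paper's own proof almost step for step: the paper also takes $\prod_{i\in N}y_i=\gcd(n,k!)$ so that the leading coefficient of $G(x)$ matches that of $\frac{n}{\gcd(k!,n)}B_k(x)$ in Equation~\ref{van_poly}, chooses consecutive coset representatives so that the $e_i$ copies of $F_{z_i}(x)$ concatenate to $(x-1)(x-2)\cdots(x-z_ie_i)$, invokes the grouping reduction across the (coprime) distinct primes to collapse the product to $(x-1)\cdots(x-z_je_j)$ for the maximal $z_je_j$, and then reruns the polynomial-division induction from the end of Section~\ref{vanishing_polynomials_over_znz}. The one step you refused to take on faith --- degree matching --- is dispatched in the paper by the bare assertion that $z_i^{e_i}\mid k!$ forces $k\ge z_ie_i$. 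That assertion is false exactly when carries occur in Legendre's formula: $2^3\mid 4!$ yet $4<6$, i.e.\ precisely outside your regime $e_p\le p$. So your proposal is incomplete as a proof (you stop at an undecided crux), but your diagnosis is correct, and the paper's proof contains a genuine error at the very point you isolated.

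Moreover, your proposed decisive test on $\Z{8}$ resolves the crux \emph{negatively}, so the converse containment you were attempting cannot be established at all. Take $n=8$, $k=s=4$, so $\gcd(4!,8)=8=2^3$ and $N$ must contain all three copies of $y_i=2$. The construction's monic output $F_2(x)^3$ has degree $6$, and the grouping reduction is inapplicable since equal $y_i$ fail the coprimality-type condition (a multiple of $2$ and $2$ need not be a multiple of $4$). Worse, reducing modulo $2$, every $F_2(x)$ maps to $x(x+1)$ regardless of representatives, while every output with $M\ne\emptyset$ maps to $0$; hence the ideal generated by \emph{all} polynomials the construction can produce maps into $\left(x^3(x+1)^3\right)\subset\Z{2}[x]$. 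But the Singmaster generator $B_4(x)=x(x-1)(x-2)(x-3)$ is vanishing over $\Z{8}$ (a product of four consecutive integers is divisible by $4!=24$, hence by $8$) and reduces to $x^2(x+1)^2$, which has degree $4<6$ and so does not lie in $\left(x^3(x+1)^3\right)$. Therefore $B_4$ is not in the ideal generated by the construction's polynomials for any choice of representatives, and the proposition, read as claiming that these polynomials generate the full vanishing ideal, fails already at $n=8$. Your fallback plan --- settle $\Z{p^e}$ directly and lift via CRT and $I\cong I_1\times\cdots\times I_k$ --- is structurally sound but cannot rescue the statement, because the prime-power case with $e>p$ is exactly where it breaks; any repair must enlarge Theorem~\ref{general_vanishing_poly}'s construction to capture the extra divisibility that products of consecutive residues enjoy beyond the crude bound $F_y(x)\in(y)$, which is precisely the information $\gcd(k!,n)$ encodes and the zero-divisor factorization does not.
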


\begin{proof}
This is essentially the proof found at the end of Section~\ref{vanishing_polynomials_over_znz}. Let us define $N$ such that $\prod_{i \in N} y_i = \gcd(n, k!)$. Then $G(x)$ as defined in Theorem~\ref{general_vanishing_poly} behaves identically to $\frac{n}{\gcd(k!, n)} B_k(x)$ in Equation~\ref{van_poly}. In particular, note that we get the same leading coefficients.

Thus, to completely reuse the proof in Section~\ref{vanishing_polynomials_over_znz} it simply remains to show that this $G(x)$ has degree less than or equal to $k$ after removing duplicate terms. Let us define $z_1^{e_1}z_2^{e_2}\ldots z_l^{e_l} = \gcd(n, k!)$ to be the prime factorization of $\gcd(n, k!)$. Using the method above we can chose representatives to get $F_{z_i}(x)F_{z_i}(x)\ldots F_{z_i}(x) = (x-1)(x-2)\ldots(x-z_ie_i)$ for each $z_i^{e_i}$. After performing the grouping as described, we simply get $(x-1)(x-2)\ldots(x-z_je_j)$ for whichever $z_je_j$ was maximal (since all $z_i$ are coprime). Now, if $k!$ has the factor $z_i^{e_i}$, $k$ is at least $z_i e_i$ so the polynomial we get has degree less than or equal to $k$. From this point, the proof is identical, except with $\frac{n}{\gcd(k!, n)} B_k(x)$ replaced by an appropriate $G(x)$ for each value of $k$.
\end{proof}

This is not the case in general. For instance, the polynomial $x(x+(1,1,1,1,\ldots))$ is vanishing over $\prod_{n=1}^{\infty} \Z{2}$ but does not correspond to any pair of zero divisors as described previously. This gap, however, can be easily fixed by noticing that if a ring $R$ can be represented as a direct product of a ring $S$ with itself and some other ring $T$, namely $R = S\times S \times T$, a vanishing polynomial for the ring $S\times T$ can be trivially generalized to a vanishing polynomial for the ring $R$ by replacing any terms $(s, t)$ with $(s, s, t)$. In particular, doing this infinitely for $S = \Z{2}$, we get the polynomial $x(x+(1,1,1,1,\ldots))$ over $\prod_{n=1}^{\infty} \Z{2}$. 

\section{Acknowledgements}

We would like to thank our research advisor, Prof. James Barker Coykendall, for mentoring us throughout this research process. We would also like to thank Dr. Felix Gotti and Dr. Tanya Khovanova for reviewing the paper and providing many helpful comments for revision. Finally, we would like to express our gratitude to Dr. Pavel Etingof, Dr. Slava Gerovitch, and the MIT PRIMES program for making this project possible.

\bibliography{refs}{}
\bibliographystyle{plain}

\end{document}